\newif\iffurther
\newtheorem{thm}{Theorem}[section] 
\newtheorem{claim}[thm]{Claim}
\newtheorem{cor}[thm]{Corollary}
\newtheorem{exmpl}[thm]{Example}
\newtheorem{lem}[thm]{Lemma}
\newtheorem{prop}[thm]{Proposition}
\newtheorem{proper}[thm]{Property}
\newtheorem{ques}[thm]{Question}
\def\[{\left[}
\def\]{\right]}
\def\GK{{\operatorname{GKdim}}}
\def\GKdim{{Gel'fand-Kirillov dimension}}
\long\def\forget#1\forgotten{{}}
\begin{document}

\title{Prime and Primitive Alegbras with Prescribed Growth Types}

\author{Be'eri Greenfeld}
\address{Department of Mathematics, Bar Ilan University, Ramat Gan 5290002, Israel}
\email{beeri.greenfeld@gmail.com}

\thanks{The writer wishes to thank Prof.~Agata Smoktunowicz and Prof.~Rostislav Grigorchuk for interesting related correspondence, and Prof.~Uzi Vishne and Prof.~Louis Rowen for their remarks about the paper. The writer is gratefully thankful to the referee.}

\begin{abstract}
Bartholdi and Smoktunowicz constructed in \cite{SmoktunowiczBartholdi} finitely generated monomial algebras with prescribed sufficiently fast growth types. We show that their construction need not result in a prime algebra, but it can be modified to provide prime algebras without further limitations on the growth type. 

Moreover, using a construction of an inverse system of monomial ideals which arise from this construction we are able to further construct finitely generated primitive algebras without further limitations on the growth type.

Then, inspired by Zelmanov's example \cite{Zelmanov}, we show how our prime algebras can be constructed such that they contain non-zero locally nilpotent ideals; this is the very opposite of the primitive constructions.

\end{abstract}

\maketitle

\section{Introduction} \label{section1}

\subsection{Background} A major direction in the investigation of growth of algebras is the problem of realizing functions as the growth types of finitely generated algebras. Clearly, every such growth function is increasing and submultiplicative, namely $f(n)<f(n+1)$ and $f(m+n)\leq f(m)f(n)$.

Since the precise growth function is determined by the generating subspace one chooses, it is defined up to the equivalence $f\sim g$ if and only if $f(n)\leq g(Cn)\leq f(Dn)$ for some $C,D>0$ and for all $n\in \mathbb{N}$. For further information about growth functions we refer the reader to \cite{delaHarpe}.

Fix an arbitrary base field $F$. 
In \cite{SmoktunowiczBartholdi}, the authors prove the following:

\begin{thm}[{\cite[Theorem~C]{SmoktunowiczBartholdi}}] \label{Corollary}
Let $f:\mathbb{N}\rightarrow \mathbb{N}$ be submultiplicative and increasing. Then there exists a finitely generated monomial algebra $B$ whose growth function satisfies $$f(2^n)\leq \dim_F B(2^n)\leq 2^{2n+3}f(2^{n+1}).$$
\end{thm}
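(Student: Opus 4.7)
The plan is to construct $B$ as a quotient $F\langle x_0, x_1\rangle / I$ of the free algebra on two generators, where $I$ is a monomial ideal whose complement (the set of surviving monomials) forms a factorial language specified by a dyadic inductive procedure. The central combinatorial object will be a family $W_0, W_1, W_2, \ldots$ of sets of binary words, with $W_n$ consisting of words of length exactly $2^n$ and $|W_n|$ held between $f(2^n)$ and roughly $4f(2^n)$, together with a hereditary ``extension'' property linking $W_{n-1}$ to $W_n$.

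The construction proceeds by induction on the dyadic scale. I would set $W_0 = \{x_0, x_1\}$ and, given $W_{n-1}$, choose $W_n \subseteq W_{n-1} \cdot W_{n-1}$ of size near $f(2^n)$ subject to the constraint that every $w \in W_{n-1}$ appears either as the first or as the second half of some element of $W_n$. Submultiplicativity $f(2^n) \leq f(2^{n-1})^2$ guarantees that $|W_{n-1} \cdot W_{n-1}| \geq f(2^{n-1})^2 \geq f(2^n)$, so enough candidates exist to meet the target size; meanwhile the extension constraint consumes at most $\lceil |W_{n-1}|/2 \rceil$ elements of $W_n$, which is compatible with $|W_n|$ being comparable to $f(2^n) \geq f(2^{n-1}) \geq |W_{n-1}|$ (using monotonicity, after a harmless initial rescaling). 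Finally, declare $B$ to be the monomial algebra in which the nonzero monomials are exactly the contiguous substrings of elements of $\bigcup_n W_n$.

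The lower bound $\dim_F B(2^n) \geq f(2^n)$ is then immediate: $W_n \subseteq B(2^n)$, and its elements are distinct monomials of length $2^n$. For the upper bound, the extension property propagates upward, so every surviving monomial of length at most $2^n$ is a contiguous substring of some word in $W_{n+1}$. Since a word of length $2^{n+1}$ has at most $2^{n+1}$ contiguous substrings of any fixed length $k \leq 2^n$, summing over $k$ gives at most $2^n \cdot 2^{n+1} = 2^{2n+1}$ distinct substrings per element of $W_{n+1}$; hence $\dim_F B(2^n) \leq 2^{2n+1} |W_{n+1}| \leq 2^{2n+3} f(2^{n+1})$.

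The main obstacle will be the simultaneous bookkeeping in the inductive step: the cardinalities $|W_n|$ must stay above $f(2^n)$ (for the lower bound), stay below a fixed multiple of $f(2^n)$ (so the upper bound remains clean), and leave enough room inside $W_{n-1} \cdot W_{n-1}$ to realize every element of $W_{n-1}$ as a half of something in $W_n$. All three demands reduce to consequences of $f$ being increasing and submultiplicative, but verifying that they are simultaneously achievable, and that the factorial closure of $\bigcup_n W_n$ does not introduce spurious extra monomials beyond those already counted, is the delicate combinatorial point of the argument.
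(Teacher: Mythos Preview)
Your upper-bound step contains a genuine gap. You assert that ``every surviving monomial of length at most $2^n$ is a contiguous substring of some word in $W_{n+1}$,'' but the extension property only lets you lift \emph{up} (from $W_m$ with $m\le n+1$ into $W_{n+1}$); it does not let you descend from $W_m$ with $m>n+1$. An element of $W_m$ decomposes as a concatenation of $2^{m-n}$ blocks from $W_n$, and a subword of length $\le 2^n$ may straddle the boundary between two consecutive $W_n$-blocks. In your symmetric scheme $W_{n+1}\subseteq W_nW_n$ those consecutive block-pairs range over essentially all of $W_n\times W_n$, so the correct containment is only ``subword of some element of $W_nW_n$.'' That yields $\dim_F B(2^n)\lesssim 2^{n}|W_n|^2\sim 2^{n}f(2^n)^2$, and since submultiplicativity gives $f(2^n)^2\ge f(2^{n+1})$ (possibly with a huge gap, e.g.\ $f(n)=n^2$), you cannot recover the stated bound $2^{2n+3}f(2^{n+1})$.

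The construction recounted in the paper (following Bartholdi--Smoktunowicz) avoids exactly this: one takes $W(2^{n+1})=C(2^n)W(2^n)$ with $C(2^n)\subseteq W(2^n)$ of size $\lceil f(2^{n+1})/f(2^n)\rceil$. Then every element of any $W(2^m)$, viewed as a concatenation of $W(2^n)$-blocks, has each \emph{consecutive pair} of blocks lying in $C(2^n)W(2^n)\cup W(2^n)C(2^n)$; hence every surviving word of length $\le 2^n$ is a subword of one of at most $2|C(2^n)||W(2^n)|\approx 2f(2^{n+1})$ words of length $2^{n+1}$, which is what produces the factor $f(2^{n+1})$ rather than $f(2^n)^2$. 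A secondary issue: fixing two generators forces $f(2^n)\le 2^{2^n}$, i.e.\ $f(1)\le 2$, whereas the theorem is stated for arbitrary increasing submultiplicative $f$; the paper takes $d=f(1)$ generators for this reason.
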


They deduce the following corollary:

\begin{cor}[{\cite[Corollary~D]{SmoktunowiczBartholdi}}]
Let $f:\mathbb{N}\rightarrow \mathbb{N}$ be a submultiplicative, increasing, and such that $f(Cn)\geq nf(n)$ for some $C>0$ and all $n\in \mathbb{N}$. Then there exists an associative algebra of growth $\sim f$.
\end{cor}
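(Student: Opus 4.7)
The plan is to take the monomial algebra $B$ produced by Theorem \ref{Corollary} itself as the desired algebra and verify that its growth function $g(n) := \dim_F B(n)$ is equivalent to $f$, using the extra hypothesis $f(Cn) \geq n f(n)$ to absorb the polynomial overhead $2^{2n+3}$ that appears in the conclusion of Theorem \ref{Corollary}.

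The easy half of the equivalence is the lower comparison $f(n) \leq g(2n)$. Given $n$, pick $k$ with $2^k \leq n < 2^{k+1}$; then by monotonicity of $f$ and of $g$, and by the left inequality of Theorem \ref{Corollary},
$$f(n) \leq f(2^{k+1}) \leq g(2^{k+1}) \leq g(2n).$$

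The main work is the reverse comparison $g(n) \leq f(Dn)$. For the same $k$, the right inequality of Theorem \ref{Corollary} gives
$$g(n) \leq g(2^{k+1}) \leq 2^{2(k+1)+3} f(2^{k+2}) \leq 32 \cdot 4^k f(4n) \leq 32\, n^2 f(4n),$$
so the task reduces to dominating $n^2 f(4n)$ by $f(Dn)$. This is precisely what the hypothesis $f(Cn) \geq n f(n)$ is designed to do: iterating it twice yields
$$f(C^2 m) \geq (Cm)\cdot f(Cm) \geq C m^2 f(m),$$
and substituting $m = 4n$ (and replacing $C$ by a larger constant if needed, so that $C \geq 64$) absorbs the prefactor $32n^2$, giving $g(n) \leq f(4C^2 n)$ and hence $g \sim f$.

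The main obstacle is not logical but rather lies in recognizing that the polynomial prefactor $2^{2n+3}$ in Theorem \ref{Corollary} is exactly the reason an additional growth-rate hypothesis is required: polynomially bounded $f$ cannot in general be realized by the monomial algebras of Theorem \ref{Corollary}, but the condition $f(Cn) \geq nf(n)$ forces $f$ to grow faster than every polynomial, and more importantly forces any polynomial slack in the argument to be reabsorbed by a linear rescaling of $n$. Once one notices this, the calculation above is essentially forced.
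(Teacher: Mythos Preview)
Your argument is correct. The paper does not give its own proof of this corollary---it is quoted verbatim from \cite{SmoktunowiczBartholdi}---so there is nothing to compare against directly; the closest the paper comes is in Subsection~\ref{growth}, where exactly the same mechanism (the hypothesis $f(Cn)\geq nf(n)$ forces $2^n f(2^n)\sim f(2^n)$, so the polynomial overhead from Theorem~\ref{Corollary} is absorbed by a linear rescaling) is invoked, together with a reference to Lemma~5.5 of \cite{SmoktunowiczBartholdi} for the interpolation between dyadic points that you carry out by hand.
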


The authors mention that they do not know whether their construction results in prime algebras. This paper in particular answers this, by exhibiting a similar procedure which allows one to construct primitive algebras with prescribed growth.

\begin{thm} \label{MainResult1}
Let $f:\mathbb{N}\rightarrow \mathbb{N}$ be a monotone, submultiplicative function such that there exists some $C>0$ for which $f(Cn)\geq nf(n)$.

Then there exists a finitely generated, primitive monomial algebra with growth type equivalent to $f$.
\end{thm}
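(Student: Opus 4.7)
The plan is to combine the Bartholdi--Smoktunowicz construction with a combinatorial criterion for primitivity of finitely generated monomial algebras. First, I would record the following criterion (standard in the monomial-algebra literature): if $A = F\langle X\rangle/I$ is a monomial algebra with language $L$ (its set of nonzero monomials), then $A$ is right primitive provided there exists a right-infinite word $\alpha$ over $X$ such that every finite subword of $\alpha$ lies in $L$ and every word in $L$ occurs as a subword of $\alpha$. Given such an $\alpha$, one constructs a faithful simple $A$-module on suitable suffixes of $\alpha$; in particular the transitivity condition ``for every $u, v \in L$ there exists $w$ with $uwv \in L$'' is then forced, so $A$ is prime as well.

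Next, I would realize the target algebra as $A = F\langle X\rangle/I$ with $I = \bigcap_n I_n$ coming from a decreasing inverse system of monomial ideals $I_1 \supseteq I_2 \supseteq \cdots$. At stage $n$, I would run the Bartholdi--Smoktunowicz procedure to obtain an intermediate monomial algebra $A_n = F\langle X\rangle/I_n$ with growth controlled by $f$, and then \emph{augment} its language by adjoining a block of new monomials engineered so that a chosen prefix $\alpha_n$ of the eventual infinite word $\alpha$ satisfies the following property: every admissible word of length at most $n$ that will belong to $L$ occurs as a subword of $\alpha_n$. Passing to the limit, the word $\alpha = \lim_n \alpha_n$ then fulfills the primitivity criterion for the quotient algebra $A$.

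The main obstacle is the growth bookkeeping. Requiring all admissible words of length $\leq n$ to occur inside a single prefix $\alpha_n$ forces $|\alpha_n|$ to grow at least at rate $n f(n)$, and the additional monomials needed to close the enlarged language off under the monoid structure contribute comparably to the Hilbert function. This is exactly where the assumption $f(Cn) \geq n f(n)$ is used: it provides sufficient slack in the equivalence class $\sim f$ that these insertions do not push the growth outside the class. Lining up the stages of the inverse system with powers of $C$, so that the extra contribution added at stage $n$ is absorbed into the growth allowance of stage $n+1$, I would then verify the analogue of the sandwich inequality of Theorem~\ref{Corollary} and conclude that $A$ is a finitely generated primitive monomial algebra of growth equivalent to $f$.
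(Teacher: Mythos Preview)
Your primitivity criterion is not correct as stated, and this is a genuine gap. The claim that a monomial algebra is right primitive whenever its language $L$ coincides with the factor set of some right-infinite word $\alpha$ already fails for eventually periodic $\alpha$: if $\alpha = xyyy\cdots$ then the resulting monomial algebra satisfies $xAx = 0$, so it is not even prime, and the transitivity condition you say is ``forced'' does not hold. A correct criterion of this flavour needs at least that $\alpha$ be recurrent, and even then primitivity requires further argument; the naive right module on the positions of $\alpha$ has the chain of submodules $M = M_0 \supset M_1 \supset M_2 \supset \cdots$ with one-dimensional successive quotients, so no faithful simple quotient is apparent. There is also a circularity in your inductive construction of $\alpha$: you ask that the prefix $\alpha_n$ contain every word of length $\le n$ that ``will belong to $L$'', but $L$ is determined only in the limit, and each later augmentation may introduce new short factors (at the joins between old and newly adjoined blocks) that were not present at stage $n$.

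The paper's route is quite different and avoids infinite words altogether. It iterates an operator on the Bartholdi--Smoktunowicz data which, for every nonzero monomial $w$, adjoins arbitrarily high powers $v_w^{2^t}$ of a fixed monomial $v_w$ with $w \preceq v_w$ into the $C$-sets; consequently the two-sided ideal of each nonzero monomial contains a non-nilpotent element. Combined with the facts that the Jacobson radical of a finitely generated monomial algebra is homogeneous (Bergman) and locally nilpotent (Beidar--Fong), a short monomial-cancellation argument then forces $J(\hat B) = 0$. Primitivity is concluded not from a direct module construction but from Okni\'nski's dichotomy: a finitely generated prime semiprimitive monomial algebra is either PI or primitive, and the super-polynomial growth hypothesis rules out PI. The growth bookkeeping is handled by choosing the augmentation parameters $\varepsilon_i$ with $\prod_i (1+\varepsilon_i) \le 2$, so that $|\hat W(2^n)| \le 2^n |W(2^n)|$ and the sandwich estimate of Theorem~\ref{Corollary} survives up to a harmless factor.
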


Moreover, we show that it is possible to construct prime algebras with non-zero locally nilpotent ideals having prescribed growth.
This is inspired from Zelmanov's example \cite{Zelmanov}.

\begin{thm} \label{MainResult2}
Let $f:\mathbb{N}\rightarrow \mathbb{N}$ be a monotone, submultiplicative function such that there exists some $C>0$ for which $f(Cn)\geq nf(n)$.

Then there exists a finitely generated, prime monomial algebra which contains a non-zero locally nilpotent ideal and has growth type equivalent to $f$.
\end{thm}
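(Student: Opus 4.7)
The plan is to overlay on the prime construction of Theorem~\ref{MainResult1} a sparse family of monomials that collectively generate a non-zero locally nilpotent ideal, without disturbing primeness or the growth rate. I would start from the prime monomial algebra $A = F\langle X\rangle / I$ produced by Theorem~\ref{MainResult1}, whose basis is indexed by the $X$-monomials avoiding a specified set of forbidden subwords, and whose primeness is witnessed by the existence, for every pair of allowed monomials $u,v$, of an allowed connecting monomial $w$ such that $uwv$ is still allowed.

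I would then fix a rapidly growing sequence of lengths $\ell_1<\ell_2<\cdots$ and select, at each length $\ell_n$, a single allowed monomial $v_n$, enlarging the forbidden set by declaring forbidden every word containing two (not necessarily consecutive) copies of the same $v_n$ as subwords. Write $A'$ for the resulting monomial algebra and $J$ for the two-sided ideal of $A'$ generated by the images of the $v_n$'s. Then $J$ is non-zero because each $v_n$ itself is still allowed, and $J$ is locally nilpotent: for any finite subfamily $\{v_{n_1},\dots,v_{n_k}\}$, every element of $(v_{n_1},\dots,v_{n_k})^{k+1}$ is a sum of monomials in which some $v_{n_i}$ appears twice, and therefore vanishes. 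At the same time, products $v_{n_1}\cdots v_{n_m}$ with pairwise distinct indices may remain non-zero, so $J$ itself need not be nilpotent, matching the flavor of Zelmanov's example \cite{Zelmanov}.

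It remains to check that the growth stays $\sim f$ and that primeness survives. The new relations only delete basis monomials, so the upper bound $\dim_F A'(n)\leq \dim_F A(n)$ is immediate; for the lower bound, choosing the lengths $\ell_n$ to grow fast enough, and exploiting the cushion $f(Cn)\geq nf(n)$, guarantees that the number of monomials eliminated at each length is a vanishing fraction, so $\dim_F A'(n)$ remains equivalent to $f(n)$. For primeness, given two $A'$-allowed monomials $u,v$, I would begin from a connecting word $w_0$ for $u,v$ in $A$; if $uw_0v$ happens to repeat some $v_n$, the sparsity of the $v_n$'s together with the flexibility built into the inverse-system construction behind Theorem~\ref{MainResult1} should permit $w_0$ to be lengthened or perturbed so that the repetition is broken and the product becomes $A'$-allowed.

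The principal obstacle I anticipate is precisely this last point: the insertion of the new repetition bans must be calibrated against the connecting-word mechanism underlying primeness, and the sparsity schedule for the family $\{v_n\}$ must be chosen in strict coordination with the consistency requirements of the inverse system from Theorem~\ref{MainResult1}. Getting these two schedules to cohabit---sparse enough to preserve growth and to leave room for the connecting words, dense enough to ensure $J\neq 0$ and to actually prevent nilpotency of $J$---is the delicate bookkeeping on which the whole argument rests.
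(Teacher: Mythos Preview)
There is a fatal gap: forbidding every word that contains two copies of the same $v_n$ is directly incompatible with primeness. For each $n$ the monomial $v_n$ is non-zero in $A'$, but $v_n \cdot w \cdot v_n$ contains two copies of $v_n$ for \emph{every} monomial $w$, so $v_n A' v_n = 0$. Thus $A'$ is never prime, no matter how sparse the sequence $(\ell_n)$ is or how the connecting-word mechanism is tuned. Your last paragraph anticipates having to ``break repetitions'' in $uw_0v$ by perturbing $w_0$, but when $u=v=v_n$ the repetition is forced by the endpoints themselves and cannot be perturbed away. (Weakening the ban to, say, adjacent copies only would rescue primeness but immediately destroy the pigeonhole argument for local nilpotence.)

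The paper sidesteps this incompatibility with a different mechanism. It adjoins one extra generator $x_{d+1}$ and builds the sets $\tilde C(2^n)$ so that $x_{d+1}$ can enter only through the sparse prime-making insertions $X_i$ at levels $n=2^{\nu(i)}$. The key estimate (Lemmas~\ref{prebound} and~\ref{boundL}) is that the $x_{d+1}$-degree of any allowed word of length $n$ is $O(\log n)$. Local nilpotence of $Bx_{d+1}B$ then follows because a product of $n$ elements of this ideal would have $x_{d+1}$-degree at least $n$ in a word of length $O(n)$, contradicting the logarithmic bound; yet $x_{d+1}Bx_{d+1}\neq 0$, since words with several---just not linearly many---occurrences of $x_{d+1}$ remain allowed, so primeness is preserved. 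The idea you are missing is to enforce a \emph{sublinear degree bound} on a distinguished letter rather than to ban repetitions outright.
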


Note that an earlier result of Trofimov \cite{Semigroup} shows that for every $f_{-}(n)\succ n^2$ and $f_{+}\prec \exp(n)$ there exists a $2$-generated semigroup with growth function infinitely often smaller than $f_{-}$ and infinitely often larger than $f_{+}$.

Before proceeding to recall the original construction, we mention that the situation for growth of groups is very different. Grigorchuk \cite{gap1,gap2} conjectured that there do not exist group growth types which are super-polynomial but strictly asymptotically lower than $e^{n^\beta}$ (for some $0<\beta<1$).

\subsection{Aim and structure of the paper}
We show in Section \ref{section2} that the construction from \cite{SmoktunowiczBartholdi} can be modified such that the resulting algebras are prime (though in general the resulting algebras need not be prime).

In Section \ref{sectionprimitive} we construct primitive algebras with prescribed growth types. Namely, we prove Theorem \ref{MainResult1}.
This is done through a construction of inductive systems of monomial ideals, which enables us to achieve sufficient control both on the growth of the constructed algebras and on their Jacobson radical, from which we deduce primitivity.

In Section \ref{sectionnilpotent} we construct prime algebras with non-zero locally nilpotent ideals and prescribed growth, namely, we prove Theorem \ref{MainResult2}.

\section{Prime modification} \label{section2}

We start by introducing the original construction of Bartholdi and Smoktunowicz.

\subsection{The construction from \cite{SmoktunowiczBartholdi}}
Assume $f(1)=d$. Let $A=F\left<x_1,\dots,x_d\right>$ be the $d$-generated free $F$-algebra, and recall that it is naturally graded via $\deg(x_i)=1$.

Let $M(n)$ be the set of all monomials of degree $n$ and $M=\bigcup_{n\in \mathbb{N}} M(n)$ the set of all monomials.

We inductively construct sets $W(2^n)\subseteq M(2^n)$ as follows: set $W(1)=M(1)=\{x_1,\dots,x_d\}$. To construct $W(2^{n+1})$, assume $W(2^n)$ has been constructed and pick an arbitrary subset $C(2^n)\subseteq W(2^n)$ such that $|C(2^n)|=\lceil \frac{f(2^{n+1})}{f(2^n)} \rceil$, and set $W(2^{n+1})=C(2^n)W(2^n)$.

It is indeed possible to pick such sets:
\begin{lem}
For all $n\in \mathbb{N}$, we have that $|W(2^n)|\geq \frac{f(2^{n+1})}{f(2^n)}$.
\end{lem}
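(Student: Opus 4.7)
My plan is to prove a slightly stronger inequality, $|W(2^n)| \geq f(2^n)$, by induction on $n$. This strengthening is the natural invariant here: the lemma as stated is exactly the condition needed to ensure that $C(2^n)$ can be chosen at the next step, whereas $|W(2^n)| \geq f(2^n)$ tracks the size of the constructed set directly and implies the stated bound at once via submultiplicativity, since $f(2^{n+1}) \leq f(2^n)^2$.

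For the base case I would note that $|W(1)| = d = f(1)$ by construction. For the inductive step, assuming $|W(2^n)| \geq f(2^n)$, I would first observe that $f(2^n) \geq f(2^{n+1})/f(2^n)$ by submultiplicativity, and then, using that $|W(2^n)|$ is an integer, conclude $|W(2^n)| \geq \lceil f(2^{n+1})/f(2^n) \rceil$, so that a subset $C(2^n)\subseteq W(2^n)$ of the prescribed cardinality exists. The key bookkeeping step is that the products $uv$ with $u\in C(2^n)$ and $v\in W(2^n)$ are pairwise distinct monomials in $M(2^{n+1})$, since both factors can be read off a length-$2^{n+1}$ word by splitting at position $2^n$. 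Consequently
$$
|W(2^{n+1})| = |C(2^n)|\cdot |W(2^n)| \geq \frac{f(2^{n+1})}{f(2^n)}\cdot f(2^n) = f(2^{n+1}),
$$
closing the induction. The lemma then follows from one further appeal to submultiplicativity.

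The point to be careful about — and the reason I would prefer the stronger invariant — is that a direct induction on the stated bound $|W(2^n)| \geq f(2^{n+1})/f(2^n)$ would force the auxiliary inequality $f(2^{n+1})^3 \geq f(2^n)^2 f(2^{n+2})$, which is not a consequence of submultiplicativity and monotonicity alone (submultiplicativity in fact controls these quantities in the opposite direction). Working with the invariant $f(2^n)$ sidesteps this and lets the recursion telescope cleanly: unrolling gives $|W(2^n)| = f(1)\prod_{k=0}^{n-1}\lceil f(2^{k+1})/f(2^k)\rceil \geq f(1)\cdot f(2^n)/f(1) = f(2^n)$, which is really the essence of the argument.
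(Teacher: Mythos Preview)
Your proof is correct and follows essentially the same approach as the paper: establish the stronger invariant $|W(2^n)|\geq f(2^n)$ by induction and then deduce the stated bound from submultiplicativity $f(2^{n+1})\leq f(2^n)^2$. The paper's proof is a terse two-line version of exactly this argument, while you have spelled out the base case, the well-definedness of $C(2^n)$, and the distinctness of products, all of which are implicit there.
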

\begin{proof}
For all $n\in \mathbb{N}$, we have $|W(2^n)|\geq f(2^n)$ as can be shown by induction. Now by submultiplicativity $f(2^{n+1})\leq f(2^n)^2$.
\end{proof}

Let $I\triangleleft A$ be the monomial ideal generated by all monomials which do not appear as subwords of any monomial from $W=\bigcup_{n\in \mathbb{N}} W(n)$. Namely, $I$ is generated by the following set of monomials: $\{w\in M|AwA\cap W=\emptyset\}$.

By Lemmata 6.4,6.5 from \cite{SmoktunowiczBartholdi}, the algebra $B=A/I$ has the desired growth rate.

\subsection{Prime Algebras}

We first introduce an example which shows that the original construction from \cite{SmoktunowiczBartholdi} might result in non-prime algebras.

\begin{exmpl} (A non-semiprime example.) \label{example}
Suppose $f(n)\leq (d-1)^{n}$ for all $n\in \mathbb{N}$. Then one can choose $W(1)=\{x_1,\dots,x_d\}$ and let $W(1)^*=W(1)\setminus \{x_1\}$. Now take $C(1)\subseteq W(1)^*, C(2)\subseteq C(1)\cdot W(1)^*, C(4)\subseteq C(2)\cdot C(1)\cdot W(1)^*$ and so on. This is indeed possible since we assume $f(n)\leq (d-1)^n$.

In the algebra $B$, the image of the element $x_1$ is non-zero, yet generates a nilpotent ideal, as none of the sets $C(2^n)$ contains a monomial with $x_1$ as a subword.

\end{exmpl}

We show how one can choose the sets $C(2^n)$ such that $B$ is prime.

For any $m'\geq m$, define a function $\Pi^{m'}_{m}:M(2^{m'})\rightarrow M(2^m)$ by $\Pi^{m'}_{m}(uw)=w$ where $|u|=2^{m'}-2^m,|w|=2^m$. For a subset $S\subseteq W(2^{m'})$ define $\Pi^{m'}_{m}(S)=\{\Pi^{m'}_{m}(s):s\in S\}$.

\begin{lem} \label{Pi_onto}
For every $m'>m$, the function $\Pi^{m'}_{m}$ maps $W(2^{m'})$ onto $W(2^m)$.
\end{lem}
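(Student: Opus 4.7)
My plan is to proceed by induction on the difference $m' - m$, exploiting directly the recursive definition $W(2^{n+1}) = C(2^n) W(2^n)$.

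For the base case $m' = m+1$, an arbitrary element of $W(2^{m+1}) = C(2^m) W(2^m)$ has the form $cw$ with $c \in C(2^m)$ and $w \in W(2^m)$, each of length $2^m$. Hence its last $2^m$ letters are exactly $w$, so $\Pi^{m+1}_m(cw) = w$. Since $C(2^m)$ is nonempty (its cardinality is $\lceil f(2^{m+1})/f(2^m)\rceil \geq 1$) and since we may pair any fixed $c \in C(2^m)$ with every $w \in W(2^m)$, we conclude $\Pi^{m+1}_m(W(2^{m+1})) = W(2^m)$.

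For the inductive step, suppose the statement has been established whenever the difference of indices is smaller than $m' - m$. Any element of $W(2^{m'}) = C(2^{m'-1}) W(2^{m'-1})$ is of the form $cw$ with $|c|=|w|=2^{m'-1}$. Because $m \leq m' - 1$, we have $2^m \leq 2^{m'-1}$, so the last $2^m$ letters of $cw$ lie entirely inside $w$; therefore $\Pi^{m'}_m(cw) = \Pi^{m'-1}_m(w)$. As $w$ ranges over all of $W(2^{m'-1})$ (for any fixed $c \in C(2^{m'-1})$), the inductive hypothesis gives $\Pi^{m'-1}_m(w)$ ranging over all of $W(2^m)$, completing the induction.

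I do not anticipate any real obstacle here: the statement is essentially a direct unrolling of the recursion, and the only point worth being explicit about is the nonemptiness of each $C(2^n)$, which follows from $f$ being positive-valued together with the cardinality estimate in the preceding lemma. The lemma will subsequently be used to guarantee that monomials from $W(2^m)$ continue to appear as suffixes of monomials at all higher levels, which is exactly what one needs in order to rule out the nilpotent-ideal pathology exhibited in Example~\ref{example}.
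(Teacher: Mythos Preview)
Your proof is correct and is essentially the same as the paper's: the paper simply observes that $W(2^{m'})=C(2^{m'-1})\cdots C(2^{m})W(2^m)$, which is exactly the recursion you unroll by induction. Your explicit mention of the nonemptiness of each $C(2^n)$ is a nice touch but not something the paper spells out.
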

\begin{proof}
This follows because $W(2^{m'})=C(2^{m'-1})\cdots C(2^{m})\cdot W(2^m)$.
\end{proof}

Let $\mu:\mathbb{N}\cup \{0\}\rightarrow \mathbb{N}$ be an increasing monotone function satisfying that for all $n\in \mathbb{N}\cup \{0\}$, we have $f(2^{\mu(n)+1})\geq |W(2^n)|\cdot f(2^{\mu(n)})$.

This is indeed possible, since otherwise $f(2^{d+1})\leq \alpha f(2^d)$ for some $\alpha>0$ and for all $d\in \mathbb{N}$, and therefore $f(2^m\cdot 2^d)\leq \alpha^m f(2^d)$. Taking $m$ such that $2^m>C$ (for $C$ from \ref{Corollary}), one takes $n=2^d>\alpha^m$ and obtains by the assumption $f(Cn)\geq nf(n)$:
$$ 2^d f(2^d)\leq f(C\cdot 2^d)\leq f(2^m\cdot 2^d)\leq \alpha^mf(2^d)<2^d f(2^d),$$ 
a contradiction.

For all $n\in \mathbb{N}$, choose $C(2^{\mu(n)})=S$, where $S$ is an arbitrary subset of $W(2^{\mu(n)})$ such that $\Pi^{\mu(n)}_{n}(S)=W(2^n)$. This is indeed possible by Lemma \ref{Pi_onto} and since we have, by construction of $\mu$:
$$ |C(2^{\mu(n)})|\geq \frac{f(2^{\mu(n)+1})}{f(2^{\mu(n)})}\geq |W(2^n)|.$$

\begin{prop} \label{prime}
The algebra $B$ constructed above is prime.
\end{prop}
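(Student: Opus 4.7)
The plan is to reduce primeness to a combinatorial statement about subwords of $W$ and then exploit the new surjectivity $\Pi^{\mu(n)}_n(C(2^{\mu(n)})) = W(2^n)$ forced by the modified choice of the $C(2^m)$'s. Since $B$ is a monomial algebra, primeness amounts to showing that for any two monomials $u,v$ nonzero in $B$ (equivalently, each a subword of some element of $W$), there is a monomial $x$ with $uxv$ also a subword of some element of $W$; having such $x$ for arbitrary good $u,v$ rules out any product of nonzero two-sided ideals vanishing.

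First I would verify a stability observation: if $u$ is a subword of some element of $W(2^k)$, then for every $N\geq k$, $u$ remains a subword of some element of $W(2^N)$, since $W(2^{k+1})=C(2^k)\cdot W(2^k)$ with $C(2^k)$ nonempty (just prepend any $c\in C(2^k)$ and iterate). Hence I may fix a single level $n$ at which $u$ is a subword of some $\tilde{w}_u\in W(2^n)$ and $v$ is a subword of some $\tilde{w}_v\in W(2^n)$.

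Next I would invoke the two surjectivities at level $\mu(n)$: by the modified construction, $\Pi^{\mu(n)}_n(C(2^{\mu(n)}))=W(2^n)$, so there exists $c\in C(2^{\mu(n)})$ with suffix $\tilde{w}_u$ of length $2^n$; by Lemma \ref{Pi_onto}, there exists $w\in W(2^{\mu(n)})$ with suffix $\tilde{w}_v$ of length $2^n$. Then $cw\in C(2^{\mu(n)})\cdot W(2^{\mu(n)})=W(2^{\mu(n)+1})\subseteq W$, and writing $c=c'\tilde{w}_u$, $w=w'\tilde{w}_v$, the word $cw=c'\tilde{w}_u w'\tilde{w}_v$ contains $u$ inside its $\tilde{w}_u$ block and $v$ inside its $\tilde{w}_v$ block, separated by an explicit intermediate string. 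Extracting, $uxv$ is a subword of $cw\in W$ for a suitable $x$, so $uxv\notin I$, which yields primeness.

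The main subtlety I expect is recognizing that the essential upgrade over the original Bartholdi--Smoktunowicz construction is precisely the surjectivity of $\Pi^{\mu(n)}_n$ restricted to $C(2^{\mu(n)})$, not merely on all of $W(2^{\mu(n)})$: this is exactly what lets the \emph{left} factor of an element of $W(2^{\mu(n)+1})$ realize any prescribed $\tilde{w}_u$, placing $u$ strictly to the left of $v$ in a single element of $W$. The size condition defining $\mu$ is what makes such a large $C(2^{\mu(n)})$ available, so the earlier calculation ruling out $f(2^{d+1})\leq\alpha f(2^d)$ is used precisely here.
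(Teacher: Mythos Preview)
Your proposal is correct and follows essentially the same route as the paper's proof: reduce to the monomial criterion, lift both subwords to a common level $W(2^n)$, then use the surjectivity $\Pi^{\mu(n)}_n(C(2^{\mu(n)}))=W(2^n)$ for the left factor and Lemma~\ref{Pi_onto} for the right factor to produce an element of $W(2^{\mu(n)+1})$ containing $u$ to the left of $v$. Your write-up is in fact a bit more explicit than the paper's (which compresses the ``stability'' step into the phrase ``by construction''), but the argument is the same.
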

\begin{proof}
Since $B$ is a monomial algebra we need to show that for all non-zero $0\neq u,u'\in B$ monomials, we have $uBu'\neq 0$.
Let $u$ be a subword of some monomial $w\in W(2^m)$ and $u'$ a subword of some monomial $w'\in W(2^{m'})$. Let $n\geq m,m'$, then $W(2^{\mu(n)+1})=C(2^{\mu(n)})\cdot W(2^{\mu(n)})$; since $\mu(n)>n\geq m'$, we have that there exists $v'\in W(2^{\mu(n)})$ with $w'$ a subword of $v'$ (by Lemma \ref{Pi_onto}), and by construction there exists $v\in C(2^{\mu(n)})$ with $w$ a subword of $v$. Therefore, $$vv'\in W(2^{\mu(n)+1})\cap BwBw'B\subseteq W(2^{\mu(n)+1})\cap BuBu'B$$ and hence $uBu'\neq 0$.
\end{proof}

Recall that the \textit{entropy} of a finitely graded algebra $R=\bigoplus_{n\in \mathbb{N}} R_n$ (namely, a graded algebra with all homogeneous components finite dimensional) is defined by: $$H(R)=\limsup_{n\rightarrow \infty} (\dim_F R_n)^{\frac{1}{n}}.$$

Note that the entropy of a growth function is not invariant under the equivalence relation $f\sim g\Leftrightarrow \exists\ C,D>0:\ f(n)\leq g(Cn)\leq f(Dn)$.

In \cite{entropy}, it is mentioned that by Theorem 1.1 (from \cite{entropy}) there exist graded algebras with arbitrarily small entropy.

As a byproduct of the above construction, we can improve this result for prime algebras:

\begin{cor}
For every $\varepsilon>0$ there exists a finitely generated, prime monomial algebra $R$ with $1<H(R)<1+\varepsilon$.
\end{cor}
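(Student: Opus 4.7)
The plan is to specialize the prime construction of Section~\ref{section2} to a function $f$ of slow exponential growth, then extract $H(B)$ from the explicit graded dimension bounds of Theorem~\ref{Corollary}. The key point is that those bounds pin $\dim_F B(2^n)$ between $\alpha^{2^n}$ and $\alpha^{2^{n+1}}$ up to polynomial-in-$2^n$ factors, so taking $2^n$-th roots and sending $n \to \infty$ traps the entropy in the interval $[\alpha,\alpha^2]$.

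Concretely, I would fix $\alpha \in (1,\sqrt{1+\varepsilon})$ and set $f(n)=\lceil\alpha^n\rceil$. This $f$ is monotone, submultiplicative (from $\lceil\alpha^m\alpha^n\rceil \le \lceil\alpha^m\rceil\lceil\alpha^n\rceil$), and satisfies $f(Cn)\ge n f(n)$ for any $C$ large enough that $\alpha^{(C-1)n}\ge n$ for all $n\in\mathbb{N}$, which is possible because $\log n/n$ is bounded on $\mathbb{N}$. Proposition~\ref{prime} therefore produces a finitely generated prime monomial algebra $B$ whose graded components satisfy
$$ f(2^n)\le \dim_F B(2^n)\le 2^{2n+3}f(2^{n+1}). $$

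For the entropy estimate, I would first observe that $B$ is generated in degree one, so $B(m)B(r)=B(m+r)$ and hence $\dim B(m+r)\le \dim B(m)\dim B(r)$; Fekete's lemma applied to $\log\dim B(m)$ then upgrades the $\limsup$ in the definition of $H$ to a genuine limit, allowing me to evaluate $H(B)$ along any subsequence. Along $m=2^n$, plugging $\alpha^{2^n}\le f(2^n)$ and $f(2^{n+1})\le 2\alpha^{2^{n+1}}$ into the above bound yields $\alpha\le (\dim_F B(2^n))^{1/2^n}\le (2^{2n+4})^{1/2^n}\alpha^2$; the prefactor tends to $1$, so $\alpha\le H(B)\le \alpha^2$, giving both strict inequalities of the corollary. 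The only point requiring care is that the prime modification of Section~\ref{section2} still obeys the upper bound of Theorem~\ref{Corollary}; this is automatic because the sets $C(2^k)$ can be chosen with the same cardinalities $\lceil f(2^{k+1})/f(2^k)\rceil$ as in the original Bartholdi--Smoktunowicz construction while simultaneously satisfying the surjectivity constraint $\Pi^{\mu(n)}_{n}(S)=W(2^n)$, by Lemma~\ref{Pi_onto} and the choice of $\mu$.
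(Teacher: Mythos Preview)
Your proof is correct and follows essentially the same approach as the paper: specialize the prime construction to an exponential $f$ and read off entropy bounds from the inequalities of Theorem~\ref{Corollary}. The only difference is that the paper interpolates those bounds to all $n$ via the nearest power of two, whereas you invoke Fekete's lemma (using $B(m)B(r)=B(m+r)$) to justify computing $H(B)$ along the subsequence $m=2^n$; this is a minor and arguably cleaner variation.
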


\begin{proof}
Take $f(n)=\lceil (1+\varepsilon)^n\rceil$ in the above construction. Note that by the above construction and Theorem \ref{Corollary}, we have a finitely generated prime monomial algebra $B$ with growth function $g(n)=\dim_F B(n)$ satisfying: $$f(2^n)\leq g(2^n)\leq 2^{2n+3}f(2^n).$$

By considering the largest power of two which does not exceed $n$, it follows that for all $n\in \mathbb{N}$ we have $f(n)\leq g(2n)\leq Cn^2f(4n)$ and therefore $$\sqrt{1+\varepsilon}\leq \limsup_{n\rightarrow \infty} f(n)^{\frac{1}{2n}} \leq H(B) \leq \limsup_{n\rightarrow \infty} f(n)^{\frac{2}{n}}\leq (1+\varepsilon)^2,$$ and the result follows.

\end{proof}

\section{Primitive Algebras} \label{sectionprimitive}

\subsection{Goals and strategy} Recall that an algebra is said to be (left) \textit{primitive} if it admits a faithful simple (left) module; every simple algebra is primitive, and every primitive algebra is prime. In \cite{arbitrary}, Vishne constructed for every real number $\beta\geq 2$ a finitely generated primitive algebra with \GKdim\ $\beta$.

Our aim in this section is to show how building on the modification from Section \ref{section2}, one can construct primitive algebras over arbitrary base fields with many growth types. In particular, we obtain examples of primitive algebras of intermediate growth.

The construction goes as follows. We use the construction from Section \ref{section2} to construct an inductive system of monomial ideals in the free algebra, whose intersection is prime. Our goals are to achieve sufficient control both on the growth of the intersection and on its Jacobson radical; the latter enables us to show that the intersection is in fact a primitive ideal.

The resulting algebra is finally the quotient of the free algebra by the mentioned above intersection, and therefore a dense subalgebra of the inverse limit of the corresponding monomial algebras.

Let $f$ be a monotone increasing, submultiplicative function which satisfies $f(Cn)>nf(n)$ for some $C>0$. Without loss of generality, we may assume further the following regularity condition: $$\liminf_{n\rightarrow \infty} \frac{f(2^{n+1})}{f(2^n)}=\infty.$$

The reason that it is indeed possible to assume this condition is the following. Consider a function $f$ such that $f(Cn)\geq nf(n)$. Pick $t\in \mathbb{N}$ for which $2^t\geq C$; then $f(2^tn)\geq nf(n)$.

Now set $f'(n)=\sum_{i=0}^{t-1} n^{-\frac{i}{t}}f(2^i n)$. Observe that $f'\sim f$.
On the other hand $f'(2n)=\sum_{i=0}^{t-1} (2n)^{-\frac{i}{t}}f(2^{i+1}n)\geq \frac{1}{2} \sum_{i=1}^{t} n^{-\frac{i-1}{t}}f(2^i n)=\frac{1}{2} n^{1/t}\sum_{i=1}^{t} n^{-\frac{i}{t}}f(2^i n) \geq \frac{1}{2} n^{1/t}\sum_{i=0}^{t-1} n^{-\frac{i}{t}}f(2^i n)+(n^{-1}f(2^t n)-f(n))$, but $n^{-1}f(2^t n)\geq n^{-1}f(Cn)\geq f(n)$ so $\liminf_{n\rightarrow \infty} \frac{f'(2n)}{f'(n)}=\infty$.

Construct sets $C(2^n)$ and $W(2^n)$ with respect to $f$ as in Section \ref{section1}, namely, the original construction from \cite{SmoktunowiczBartholdi} with resulting algebra $B=A/\{w:AwA\cap W=\emptyset\}$ (where $W=\bigcup_{n\geq 0} W(2^n)$).

\subsection{Monomial ideals} We start by constructing the system of ideals, and this is done through an operator on monomial ideals.

Given a system of sets $C(2^n),W(2^n)$ such that $C(2^n)\subseteq W(2^n)$ and $W(2^{n+1})=C(2^n)W(2^n)$ such that $\liminf_{n\rightarrow\infty} |C(2^n)|=\infty$ we show how one can form a new system $C'(2^n),W'(2^n)$ extending the original system (namely: $W(2^n)\subseteq W'(2^n)$) with the same properties. Moreover, the quotient $B'=A/\{w:AwA\cap W'=\emptyset\}$ (where $W'=\bigcup_{n\geq 0} W'(2^n)$) has the following property.

\begin{proper} \label{property}
For every monomial $w\in W(2^n)$ there exists a monomial $v$ such that $w\preceq v$ and $v^t\in \bigcup_{i\geq 0} C'(2^i)$ for infinitely many $t$.
\end{proper}

The construction depends on a parameter $\varepsilon>0$, which we fix for the rest of the subsection.

For $i\geq 0$ define $\theta(i)$ such that for all $m\geq \theta(i)$, we have that $$\sum_{j\leq 2^{i}} d^j \leq \varepsilon\frac{f(2^{m+1})}{f(2^m)}.$$ This is indeed possible by the regularity assumption on $f$.

For every $w\in W(2^i)$ and $i\in \mathbb{N}\cup \{0\}$ fix some monomial $w\preceq v_w\in W(2^{\theta(i)})$.

Define sets as follows: $$X_n=\{v_w^{2^t}:w\in W(2^i)\ with\ \theta(i)\leq n; t=n-\theta(i)\}.$$

\begin{lem} \label{sizeX}
For all $n$ we have $$|X_n|\leq \sum_{j\leq 2^{i}} d^j.$$
\end{lem}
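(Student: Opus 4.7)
The plan is a direct union-bound on the data parameterizing $X_n$. By the very definition of $X_n$, every one of its elements arises as $v_w^{2^t}$ from a pair $(i,w)$ satisfying $\theta(i)\leq n$ and $w\in W(2^i)$, with $t=n-\theta(i)$ forced by $i$ and $n$. Thus the map $(i,w)\mapsto v_w^{2^{n-\theta(i)}}$ surjects onto $X_n$, giving
\[
|X_n|\;\leq\;\sum_{i\,:\,\theta(i)\leq n}|W(2^i)|.
\]

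Next, I would bound each $|W(2^i)|$ crudely by the total number of monomials of degree $2^i$ in $A$, namely $|W(2^i)|\leq |M(2^i)|=d^{2^i}$, using only that $W(2^i)\sub M(2^i)$ and that $A$ is the free algebra on $d$ generators. Writing $I:=\max\{i:\theta(i)\leq n\}$ (well defined because $\theta$ is increasing and, thanks to the standing regularity assumption $\liminf_m f(2^{m+1})/f(2^m)=\infty$, must tend to infinity with $i$), the previous estimate collapses to
\[
|X_n|\;\leq\;\sum_{0\leq i\leq I}d^{2^i}.
\]

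Finally, the exponent set $\{2^i:0\leq i\leq I\}$ is contained in $\{0,1,\dots,2^I\}$, so dominating term-by-term yields
\[
\sum_{0\leq i\leq I}d^{2^i}\;\leq\;\sum_{j\leq 2^I}d^j,
\]
which is exactly the claimed bound, the free symbol $i$ on the right-hand side being read as the top admissible index $I$. I do not anticipate a real obstacle here: the whole argument is a one-line union bound followed by a trivial dominance of exponent sets. The only minor point of care is to fix this reading of $i$ in the conclusion, so that the inequality plugs cleanly into the defining relation $\sum_{j\leq 2^i}d^j\leq\varepsilon f(2^{m+1})/f(2^m)$ for $\theta(i)$ when the lemma is invoked later in the construction.
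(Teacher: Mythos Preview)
Your proposal is correct and follows essentially the same counting argument as the paper: bound $|X_n|$ by the number of admissible monomials $w$, and then dominate by the total count $\sum_{j\leq 2^I}d^j$ of monomials of length at most $2^I$. Your version is in fact more explicit than the paper's one-line proof, and your identification of the free index $i$ in the statement as the maximal $i$ with $\theta(i)\leq n$ is exactly what is needed for the subsequent application.
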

\begin{proof}
There exist only $d^j$ monomials of length $j$, and $W(2^i)$ contains monomials of length $2^i$, so their subwords can have lengths only $j\leq 2^i$.
\end{proof}

Note that $|v_w^{2^t}|=2^t|v_w|=2^{t+\theta(i)}=2^n$ and  therefore $X_n\subseteq M(2^n)$.
Inductively construct sets $C'(2^n)=C(2^n)\cup X_n$ and $W'(2^{n+1})=C'(2^n)W'(2^n)$.

Fix $n$ and pick some $v_w^{2^t}$. Then $w\in W(2^i)$ for some $i$, and $v_w\in W(2^{\theta(i)})$ and also $v_w\in X_{\theta(i)}\subseteq C'(2^{\theta(i)})$ so $v_wv_w\in W'(2^{\theta(i)}+1)$ and inductively $v_w^{2^t}\in W'(2^n)$.

It follows that $X_{n}\subseteq W'(2^{n})$, so the construction is indeed a specific case of the original construction from \cite{SmoktunowiczBartholdi}, exposed in Section \ref{section1}.

Notice that by Lemma \ref{sizeX}, $|C'(2^n)|\leq |X_n|+|C(2^n)|\leq (1+\varepsilon)|C(2^n)|$ and therefore $|W'(2^n)|\leq (1+\varepsilon)^n|W(2^n)|$.

Observe that $B'$ indeed satisfies Property \ref{property}. Take $w$ a subword of some monomial from $W(2^n)$. Then $w\preceq v_w$ and $v_w^{2^t}\in \bigcup_{i\geq 0} C'(2^i)$.

\subsection{Our construction}
We now construct the sequence of monomial ideals. Repeat the construction $$W\rightsquigarrow W'\rightsquigarrow W''\rightsquigarrow \cdots$$ with a sequence of parameters $\varepsilon_i$ such that $\prod_{i=1}^{\infty} (1+\varepsilon_i)\leq 2$. Denote $W^{(0)}=W$ and $W^{(\alpha+1)}=(W^{(\alpha)})'$. Since each $W^{(\alpha)}$ is contained in $W^{(\alpha+1)}$ we can form the union $\hat{W}$ with quotient $\hat{B}=A/\{w:AwA\cap \hat{W}=\emptyset\}$. Let $\hat{W}(2^n)$ be the set of all length $2^n$ monomials of $\hat{W}$. Similarly define $\hat{C}(2^n)$.

In the language of the corresponding monomial algebras, we have an inverse system: $$\cdots \rightarrow B''\rightarrow B'\rightarrow B.$$

We can calculate the growth of the resulting algebra $\hat{B}$, corresponding to the ideal $\hat{W}$. We have that $|\hat{W}(2^n)|\leq \left(\prod_{i=1}^{\infty} (1+\varepsilon_i)\right)^n |W(2^n)|\leq 2^n|W(2^n)|$ and $|\hat{C}(2^n)|\leq 2|C(2^n)|$. By the argumentation of Subsection \ref{growth} we have that $$\dim_F \hat{B}(2^n)\leq 2^{3n+4}f(2^{n+1})\sim f(2^n)$$ and it follows that the growth of $\hat{B}$ is asymptotically $f$.

\subsection{Primitivity of $\hat{B}$} To show that the algebras constructed above are prime, we need to show by the same argument as of Proposition \ref{prime} that for every monomial $w\preceq \hat{W}(2^n)$ there exists some $n'>n$ with $w\preceq \hat{C}(2^{n'})$.

This follows since for every $w\preceq W^{(\alpha)}(2^n)$ we have that $w\preceq C^{(\alpha+1)}(2^{n'})\subseteq \hat{C}(2^{n'})$ for $n'$ arbitrarily large, by Property \ref{property}.

To compute the Jacobson radical of $\hat{B}$ we first observe that for every non-zero monomial $w\in \hat{B}$, there exists some non-nilpotent monomial $v\in \hat{B}w\hat{B}$. This follows by considering $v=v_w$ and from Property \ref{property}, as $v_w^{2^t}\neq 0$ for arbitrarily large $t$.

By \cite{locNilp}, the Jacobsn radical of a finitely generated monomial algebra is always locally nilpotent. Assume $J\neq 0$ is the Jacobson radical of $\hat{B}$, and pick some $0\neq w_1+\cdots+w_d\in J$ with $w_1,\dots,w_d$ non-zero monomials and $d\geq 1$. We may assume $|w_1|=\cdots=|w_d|$ since $J$ is homogeneous by a classical result of Bergman (see \cite{Bergman}).

By the following claim we get a contradiction to the assumption, from which we deduce that $J=0$.

\begin{claim}
The element $uw_1u'$ is nilpotent for all $u,u'$ monomials.
\end{claim}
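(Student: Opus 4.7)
The approach is to combine the local nilpotence of $J$ (ensured by \cite{locNilp}) with the rigidity of the monomial basis of $\hat{B}$. Fix monomials $u, u'$ and set
$$y = u(w_1+\cdots+w_d)u' = uw_1u' + \cdots + uw_du'.$$
Since $w_1+\cdots+w_d \in J$ and $J$ is a two-sided ideal, $y \in J$, so local nilpotence of $J$ forces the subalgebra generated by $y$ to be nilpotent; in particular $y^N = 0$ in $\hat{B}$ for some $N \geq 1$.

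Expanding in the free algebra $A$,
$$y^N = \sum_{(i_1,\ldots,i_N) \in \{1,\ldots,d\}^N} (uw_{i_1}u')(uw_{i_2}u') \cdots (uw_{i_N}u').$$
Each summand is a word in $A$ of common length $N(|u|+\ell+|u'|)$, where $\ell = |w_1|$, and after combining repeated terms in the original sum we may assume $w_1,\ldots,w_d$ are pairwise distinct. The combinatorial core of the argument is that distinct index sequences produce distinct concatenations in $A$: reading off the length-$\ell$ sub-word located at offset $|u|$ inside the $j$-th block recovers $w_{i_j}$ uniquely, so any coincidence of words forces the sequences to agree term-by-term. Consequently $(uw_1u')^N$ appears among the $d^N$ summands with multiplicity exactly one.

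Finally, since $\hat{B}$ is a monomial algebra, the monomials that are non-zero in $\hat{B}$ form an $F$-basis; so the identity $y^N = 0$ in $\hat{B}$, once equal monomials in $A$ are collected and their coefficients summed, forces every surviving basis monomial to have zero coefficient. The previous paragraph shows that $(uw_1u')^N$ has collected coefficient $1$, so it must in fact lie in the defining monomial ideal of $\hat{B}$, i.e., $(uw_1u')^N = 0$ in $\hat{B}$. This gives the desired nilpotence of $uw_1u'$. The only delicate point is the combinatorial uniqueness of the concatenation, which is immediate once the $w_i$ are pairwise distinct of common length; everything else is formal.
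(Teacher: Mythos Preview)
Your proof is correct and follows essentially the same route as the paper's: use local nilpotence of $J$ to get $(u(w_1+\cdots+w_d)u')^N=0$, expand, and use equal lengths of the $w_i$ together with the monomial-algebra structure of $\hat{B}$ to isolate $(uw_1u')^N$. Your explicit justification of why distinct index sequences yield distinct words in $A$ (reading off the length-$\ell$ slot in each block) is a bit more detailed than the paper's terse ``same length, hence no cancellation'' remark; one small nit is that after combining repeated terms the coefficient of $(uw_1u')^N$ is $c_1^N$ rather than literally $1$, but since $c_1\neq 0$ in the field $F$ this does not affect the conclusion.
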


\begin{proof}
Since $J$ is locally nilpotent, it follows that for every monomials $u,u'\in \hat{B}$ it is the case that $uw_1u'+\cdots uw_du'$ is nilpotent of some degree $t$. Compute:
$$0=(uw_1u'+\cdots uw_du')^t=(uw_1u')^t+\sum_{i_1,\dots,i_t:\exists j: i_j\neq 1} uw_{i_1}u'uw_{i_2}u'\cdots uw_{i_t}u'.$$
Since each of the monomials in the sum on the right has some $w_{i_j}\neq w_1$ and all monomials $w_i$ have the same length it follows that the monomial $(uw_1u')^t$ is not cancelled by any monomial from the sum and since $\hat{B}$ is a monomial algebra, it follows that $(uw_1u')^t=0$.
\end{proof}

It follows that $\hat{B}$ is semiprimitive. By \cite{Okninski}, a finitely generated prime semiprimitive monomial algebra is either PI or primitive. Since $\GK(\hat{B})=\infty$ (as $f$ is super-polynomial), it is impossible that $\hat{B}$ is PI, and therefore. It follows that $\hat{B}$ is primitive as required.

\section{Locally Nilpotent Ideals} \label{sectionnilpotent}

In \cite{Zelmanov}, Zelmanov constructed a finitely generated prime monomial algebra with a non-zero locally nilpotent ideal. In \cite{SmoktunowiczVishne}, Smoktunowicz and Vishne adjusted Zelmanov's example to have quadratic growth.

The aim of this section to modify the construction from Section \ref{section2} to obtain prime algebras with non-zero locally nilpotent ideals (with growth functions arbitrary as above). Let $\left<x_1,\dots,x_m\right>$ denote the free semigroup generated by $x_1,\dots,x_m$. Let $A=F\left<x_1,\dots,x_{d+1}\right>$.
Fix $\mu:\mathbb{N}\cup\{0\}\rightarrow \mathbb{N}$ as above, and furthermore assume $\mu(n)=2^{\nu(n)}$ for every $n\in \mathbb{N}\cup\{0\}$, and $\nu$ some increasing monotone function that will be determined in the sequel.

\subsection{Construction} \label{construction}
First take $\tilde{f}(1)=f(1)+1=d+1$ and $\tilde{f}(n)=f(n)$ for all $n>1$. Construct sets $C(2^n)$ and $W(2^n)$ as in Example \ref{example} from Section \ref{section2} with respect to $\tilde{f}$, and such that $C(2^n)\subseteq \left<x_1,\dots,x_d\right>$ (but $W(1)=\{x_1,\dots,x_{d+1}\}$).

Now inductively construct sets $\tilde{C}(2^n)$ and $\tilde{W}(2^n)$. For $n\neq 2^{\nu(i)}$, set $\tilde{C}(2^n)=C(2^n)$ and $\tilde{W}(2^{n+1})=\tilde{C}(2^n)\cdot \tilde{W}(2^n)$. For $n=2^{\nu(i)}$ set $\tilde{C}(2^{2^{\nu(i)}})=C(2^{2^{\nu(i)}})\cup X_{i}$ where $X_i$ is an arbitrary subset of $\tilde{W}(2^{2^{\nu(i)}})$ of cardinality $|\tilde{W}(2^i)|$ such that $\Pi^{2^{\nu(i)}}_{i}(X_i)=\tilde{W}(2^i)$ and $\tilde{W}(2^{n+1})=\tilde{C}(2^n)\cdot \tilde{W}(2^n)$. In each step, take $\nu(i)$ arbitrary large enough such that $|\tilde{C}(2^{2^{\nu(i)}})|\leq (1+\varepsilon_i)|C(2^{2^{\nu(i)}})|$, with $\varepsilon_i$ taken such that $\prod_{i=1}^{\infty} (1+\varepsilon_i)=l<\infty$. This is indeed possible since as we mention in Section \ref{section2}, the numbers $\frac{f(2^{m+1})}{f(2^m)}$ are unbounded. It follows that $|\tilde{C}(2^n)|\leq l|C(2^n)|,|\tilde{W}(2^n)|\leq l|W(2^n)|$. 

Let $\tilde{W}=\bigcup_{n\in \mathbb{N}} \tilde{W}(2^n)$.

It follows that the algebra $B$ constructed by $B=A/\{w:AwA\cap \tilde{W}=\emptyset\}$ is prime, by the same argument of Proposition \ref{prime}: Indeed, $\Pi^{\mu(n)}_{n}(\tilde{W}(2^{\mu(n)}))=\tilde{W}(2^n)$. It naturally homomorphically maps onto the algebra $B_0=A/\{w:AwA\cap W=\emptyset\}$ (in which, as in Example \ref{example}, the ideal generated by $x_{d+1}$ is nilpotent), as $W\subseteq \tilde{W}$.

\subsection{Our construction has growth $\sim f$} \label{growth}
By the last sentence in the proof of Theorem C from \cite{SmoktunowiczBartholdi}, we have that: $$\dim_F B(2^n)\leq 2(2^n + 1)|\tilde{W}(2^n)|\cdot|\tilde{C}(2^n)|,$$
and
$$|W(2^n)|\cdot|C(2^n)|<2^{n+1}f(2^{n+1}).$$
On the other hand, by the description of the construction in Subsection \ref{construction}, we have that $|\tilde{W}(2^n)|\cdot|\tilde{C}(2^n)|\leq l^2|W(2^n)|\cdot|C(2^n)|$ and therefore $$\dim_F B(2^n)\leq l^2 2^{2n+3} f(2^{n+1}).$$

Since we assume $f(Cn)\geq nf(n)$ we have that $f(2^n)\sim 2^n f(2^n)$, and by Lemma 5.5 from \cite{SmoktunowiczBartholdi}, it follows that $\dim_F B(n)\preceq f(n)$. On the other hand, there is a graded surjective homomorphism $B\rightarrow B_0$, so finally $\dim_F B(n)\sim f(n)$, as desired.

\subsection{Locally nilpotent ideals}
Note that $\tilde{C}(2^n)\subseteq \left<x_1,\dots,x_d\right>$ unless $n=\mu(i)=2^{\nu(i)}$. For a monomial $v$, we denote $w\preceq v$ if $w$ is a subword of $v$. If $T$ is a set of monomials then we write $w\preceq T$ if there exists some $v\in T$ with $w\preceq v$.

Recall that $M=\bigcup_{n\in \mathbb{N}}M(n)$ is the set of all monomials in the free algebra $A$. Let $\lambda:M\rightarrow \mathbb{N}\cup\{0\}$ be the degree with respect to $x_{d+1}$. For a subset $T\subseteq \tilde{W}$ and $m\in \mathbb{N}$ define:
\begin{itemize}
\item $\lambda(m,T)=\max_{w\preceq T, |w|\leq m} \{\lambda(w)\}$;
\item $\lambda(T)=\sup_{m\in \mathbb{N}} \{\lambda(m,T)\}$;
\item $\lambda(m)=\sup_{w\preceq \tilde{W},|w|\leq m} \{\lambda(w)\}$.
\end{itemize}

\begin{lem} \label{prebound}
For every $m\gg 1$ we have: $$\lambda(\tilde{W}(2^m))\leq 2m.$$
\end{lem}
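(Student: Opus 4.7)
The plan is to induct on $m$, leveraging the fact that, by construction, $x_{d+1}$ enters $\tilde{W}(2^m)$ only at the extremely sparse levels $n=2^{\nu(i)}$ at which $X_i$ is appended to $\tilde{C}(2^n)$. Set $\Lambda(m)=\max_{v\in \tilde{W}(2^m)}\lambda(v)$. Because $\lambda(w)\leq \lambda(v)$ whenever $w\preceq v$, the quantity $\lambda(\tilde{W}(2^m))$ equals $\Lambda(m)$, so it is enough to prove $\Lambda(m)\leq 2m$ for $m\gg 1$; the base value is $\Lambda(0)=\lambda(x_{d+1})=1$ since $\tilde{W}(1)=\{x_1,\dots,x_{d+1}\}$.

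For the inductive step, decompose any $v\in \tilde{W}(2^m)$ as $v=cw$ with $c\in \tilde{C}(2^{m-1})$ and $w\in \tilde{W}(2^{m-1})$, using the defining relation $\tilde{W}(2^m)=\tilde{C}(2^{m-1})\cdot \tilde{W}(2^{m-1})$. Since $\lambda$ is additive under concatenation, $\lambda(v)=\lambda(c)+\lambda(w)\leq \lambda(c)+\Lambda(m-1)$. By Subsection~\ref{construction}, $\tilde{C}(2^{m-1})\subseteq \langle x_1,\dots,x_d\rangle$ unless $m-1=2^{\nu(i)}$ for some $i$, in which case the extra monomials come from $X_i\subseteq \tilde{W}(2^{2^{\nu(i)}})=\tilde{W}(2^{m-1})$. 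In the generic case $\lambda(c)=0$, giving $\Lambda(m)\leq \Lambda(m-1)$; in the exceptional case $\lambda(c)\leq \Lambda(m-1)$, so $\Lambda(m)\leq 2\Lambda(m-1)$.

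Iterating yields $\Lambda(m)\leq 2^{k(m)}$, where $k(m)=|\{i\geq 0:2^{\nu(i)}\leq m-1\}|$. The final and only delicate point is to observe that the earlier constraints on $\nu$ in Subsection~\ref{construction} are \emph{lower} bounds (``$\nu(i)$ arbitrary large enough''), so we may additionally require $\nu(i)\geq i$ for every $i\geq 0$ without disturbing anything. Under this choice, $2^{\nu(i)}\leq m-1$ forces $i\leq \log_2(m-1)$, whence $k(m)\leq 1+\log_2(m-1)$ and therefore $\Lambda(m)\leq 2\cdot 2^{\log_2(m-1)}\leq 2m$ for all $m\geq 2$. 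The main obstacle is thus conceptual rather than computational: recognising that the dichotomy ``$m-1=2^{\nu(i)}$ versus not'' controls all occurrences of $x_{d+1}$, and that $\nu$ may be chosen so that these exceptional levels are geometrically sparse.
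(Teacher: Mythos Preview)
Your argument is correct and follows essentially the same route as the paper: both proofs observe that $\lambda(\tilde{W}(2^{m}))$ is unchanged at non-exceptional steps and at most doubles at the exceptional steps $m-1=2^{\nu(i)}$, and then bound the number of doublings by $\log_2 m + O(1)$. The one superfluous move in your write-up is the additional hypothesis $\nu(i)\geq i$: since the exceptional levels $2^{\nu(i)}$ are by definition powers of $2$, there are automatically at most $\lceil \log_2 m\rceil$ of them below $m$, which already gives $\Lambda(m)\leq 2^{\lceil \log_2 m\rceil}\leq 2m$ with no extra assumption on $\nu$.
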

\begin{proof}
Fix $m$. If $m\neq 2^{\nu(i)}$ for all $i$ then $\tilde{W}(2^{m+1})=C(2^m)\tilde{W}(2^m)$ with $\lambda (C(2^m))=0$ (namely, $C(2^m)\subseteq \left<x_1,\dots,x_d\right>$). Therefore, $\lambda(\tilde{W}(2^{m+1}))=\lambda(\tilde{W}(2^m))$.

If $m=2^{\nu(i)}$ then $\lambda(\tilde{W}(2^{m+1}))\leq 2\lambda(\tilde{W}(2^m))$. Recall that $\lambda(\tilde{W}(1))=1$.

Hence for all $m\gg 1$ we have $\lambda(\tilde{W}(2^m))\leq 2^{\lceil \log_2 m\rceil}\leq 2m$.
\end{proof}

\begin{lem} \label{boundL}
For $n\gg 1$, we have $\lambda(n)\leq 2m$ where $m=\lceil \log_2 n\rceil+2$.
\end{lem}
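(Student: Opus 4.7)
The plan is to reduce the bound on $\lambda(n)$ to Lemma~\ref{prebound} by embedding each short subword of $\tilde{W}$ into a tightly controlled portion of some $\tilde{W}(2^{m'})$ with $m'\leq m$. Fix $w\preceq \tilde{W}$ with $|w|\leq n$, so $|w|\leq 2^{m-2}$, and pick $v\in \tilde{W}(2^K)$ with $w\preceq v$ for some $K\geq m$. Iterating the identity $\tilde{W}(2^{K})=\tilde{C}(2^{K-1})\tilde{W}(2^{K-1})$ and using the inclusion $\tilde{C}(2^i)\subseteq \tilde{W}(2^i)$, an easy induction shows that $v$ factors as a concatenation of $2^{K-m+1}$ consecutive blocks of length $2^{m-1}$, each of which lies in $\tilde{W}(2^{m-1})$.

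Since $|w|\leq 2^{m-2}$ is half the block length, $w$ occupies at most two adjacent such blocks. In the non-straddling case, $w$ sits inside a single block $B\in \tilde{W}(2^{m-1})$, and Lemma~\ref{prebound} immediately gives $\lambda(w)\leq 2(m-1)\leq 2m$. In the straddling case, write $w=w_Lw_R$ where $w_L$ is a suffix of some $B_L\in \tilde{W}(2^{m-1})$ and $w_R$ is a prefix of the adjacent $B_R\in \tilde{W}(2^{m-1})$, both of length at most $2^{m-2}$. A further descent through the recursive structure $\tilde{W}(2^{m-1})=\tilde{C}(2^{m-2})\tilde{W}(2^{m-2})$ shows that such a suffix (respectively, such a prefix) of an element of $\tilde{W}(2^{m-1})$ of length at most $2^{m-2}$ is itself a subword of an element of $\tilde{W}(2^{m-2})$. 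Two applications of Lemma~\ref{prebound} combined with the additivity of $\lambda$ across concatenations then deliver the required bound on $\lambda(w)$.

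The main difficulty is the straddling case: a naive sum of the two halves via the bound $\lambda(\tilde{W}(2^{m-2}))\leq 2(m-2)$ only yields $\lambda(w)\leq 4(m-2)$, which overshoots $2m$ for large $m$. This is absorbed by invoking the sharper form of the estimate in Lemma~\ref{prebound}, namely $\lambda(\tilde{W}(2^j))\leq 2^{\lceil \log_2 j\rceil}$, which arises from counting the doublings at the exceptional levels $j=2^{\nu(i)}$. The two levels of slack built into $m=\lceil \log_2 n\rceil+2$ are calibrated precisely so that after descending twice—from $v$ down to $\tilde{W}(2^{m-1})$ and once more to $\tilde{W}(2^{m-2})$—the total contribution still fits within the claimed $2m$.
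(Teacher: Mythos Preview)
Your block decomposition into factors from $\tilde W(2^{m-1})$ is valid (it uses $\tilde C(2^i)\subseteq\tilde W(2^i)$, which does hold), but the arithmetic in the straddling case does not close. From $w_L,w_R\preceq\tilde W(2^{m-2})$ and the sharper estimate you quote, you obtain $\lambda(w)\le 2\cdot 2^{\lceil\log_2(m-2)\rceil}$, and this exceeds $2m$ for infinitely many $m$: for $m=7$ it gives $2\cdot 2^{3}=16>14$, for $m=11$ it gives $32>22$. The point is that $2^{\lceil\log_2 j\rceil}$ can be as large as $2j-2$, so doubling it reaches roughly $4(m-2)$; the two levels of slack built into $m$ do not absorb the factor of $2$ coming from summing two halves. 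Your method does yield $\lambda(n)\le 4m$, which suffices for the subsequent proposition, but it does not prove the lemma as stated.

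The paper avoids ever adding two contributions by exploiting the one structural fact you never use: $\lambda(\tilde C(2^j))=0$ whenever $j\neq 2^{\nu(i)}$. Choosing $k\in\{\lceil\log_2 n\rceil+1,\lceil\log_2 n\rceil+2\}$ not of this exceptional form, one shows by induction on $t$ that every $w\preceq\tilde W(2^{k+t})$ with $|w|<2^k$ already satisfies $\lambda(w)\le\lambda(\tilde W(2^k))$. At a non-exceptional step the $\tilde C$-factor carries no $x_{d+1}$, so the entire $x_{d+1}$-content of $w$ sits in a single lower-level factor; at an exceptional step $k+t-1=2^{\nu(i)}$ one descends two levels at once (the intermediate level $k+t-2$ being automatically non-exceptional), and since $|w|<2^k\le 2^{k+t-2}$ the word cannot reach across the $\lambda=0$ middle block into both neighbours. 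A single application of Lemma~\ref{prebound} at level $k\le m$ then gives the claimed bound, with no doubling.
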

\begin{proof}
Let $w\preceq \tilde{W}$ with $|w|<2^k$, for some $k\neq 2^{\nu(i)}$ (for all $i$). We now show that $\lambda(w)\leq \lambda(\tilde{W}(2^k))$.

Assume $w\preceq \tilde{W}(2^{k+t})$ for some $t>0$. We prove our claim by induction on $t$. For $t=1$, $\tilde{W}(2^{k+1})=\tilde{C}(2^k)\tilde{W}(2^k)$ and $\lambda(\tilde{C}(2^k))=0$ so the claim holds. For $t\geq 2$, if $k+t-1\neq 2^{\nu(i)}$ then $\tilde{W}(2^{k+t})=\tilde{C}(2^{k+t-1})\tilde{W}(2^{k+t-1})$ with $\lambda(\tilde{C}(2^{k+t-1}))=0$, so the claim holds by induction; otherwise, $k+t-1=2^{\nu(i)}$ for some $i$, so $\tilde{W}(2^{k+t})=\tilde{C}(2^{k+t-1})\tilde{C}(2^{k+t-2})\tilde{W}(2^{k+t-2})$ with $\lambda(\tilde{C}(2^{k+t-2}))=0$ (since $k+t-2$ is not a power of $2$), and therefore we may assume either $w\preceq \tilde{C}(2^{k+t-1})\subseteq \tilde{W}(2^{k+t-1})$ or $w\preceq \tilde{W}(2^{k+t-1})$ and in both cases we are done by induction.

Take $k=\lceil \log_2 n\rceil+i$ for $i\in \{1,2\}$ such that $k$ is not of the form $2^{\nu(j)}$. It follows by Lemma \ref{prebound} that $$\lambda(n)\leq \lambda(n,\tilde{W}(2^k))\leq \lambda(n,\tilde{W}(2^m))\leq \lambda(\tilde{W}(2^m))\leq 2m.$$
\end{proof}

\begin{prop}
The ideal $Bx_{d+1}B$ is locally nilpotent.
\end{prop}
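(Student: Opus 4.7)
The plan is to exploit the monomiality of $B$ together with the logarithmic bound from Lemma \ref{boundL}. Since $B$ is a monomial algebra, the ideal $Bx_{d+1}B$ is spanned over $F$ by those non-zero monomials of $B$ that contain $x_{d+1}$ as a subword, and to establish local nilpotence it suffices to show that any finite set of such monomials generates a nilpotent (non-unital) $F$-subalgebra of $B$.

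So I would fix monomials $u_1,\dots,u_k$ in $B$, each with $\lambda(u_j)\geq 1$, and set $L=\max_j |u_j|$. Any length-$N$ product $u_{i_1}\cdots u_{i_N}$ is a single monomial of length at most $NL$ in which $x_{d+1}$ appears at least $N$ times. On the other hand, if this product is non-zero in $B$ then by construction of $B$ it must be a subword of some element of $\tilde W$, and Lemma \ref{boundL} then forces the number of its $x_{d+1}$-letters to be at most $2(\lceil \log_2(NL)\rceil+2)$. Since a logarithmic function of $N$ is eventually dominated by $N$, for $N$ sufficiently large one gets $N>2(\lceil \log_2(NL)\rceil+2)$, which is a contradiction; hence all such products vanish in $B$, so the subalgebra generated by $u_1,\dots,u_k$ is nilpotent.

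The only remaining point to verify, and really the only subtlety, is that it suffices to work with monomial generators: one picks arbitrary elements of $Bx_{d+1}B$, writes each as a linear combination of monomials (each of which necessarily contains $x_{d+1}$), and applies the bound above to each monomial summand appearing in an expanded product. The serious technical work, namely the careful bookkeeping of where the letter $x_{d+1}$ can be hidden inside the inductively built sets $\tilde W(2^n)$, was already done in Lemmata \ref{prebound} and \ref{boundL}; the present argument is a clean translation of that $O(\log n)$ bound on $\lambda(n)$ into an explicit upper bound on the nilpotence index of any finitely generated subalgebra of $Bx_{d+1}B$. I do not foresee any real obstacle beyond this reduction, since the pigeonhole between the linear lower bound $N$ on $\lambda$ of a length-$N$ product and the logarithmic upper bound given by Lemma \ref{boundL} is completely rigid.
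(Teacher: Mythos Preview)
Your proposal is correct and follows essentially the same route as the paper's proof: reduce to finitely many monomial generators of $Bx_{d+1}B$, observe that an $N$-fold product has $\lambda\geq N$ but length $\leq NL$, and contradict the logarithmic bound of Lemma~\ref{boundL}. The paper phrases the generators as $u_ix_{d+1}v_i$ and argues by contradiction, but the substance is identical; if anything, your explicit remark on why the reduction to monomial generators is legitimate is a detail the paper leaves implicit.
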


\begin{proof}
Assume otherwise. It follows that there exists a finite set of elements $S=\{u_1x_{d+1}v_1,\dots,u_mx_{d+1}v_m\}$ such that for every $n$ there exists $r$ such that $S^n$ contains a subword of some monomial from $\tilde{W}(2^r)$.

Let $M=\max_i \{ |u_i|+|v_i|+1\}$. We have that for every $n$ there exists a non-zero monomial $w_n\in S^n$ with $|w_n|\leq M\cdot n$ and $\lambda(w_n)\geq n$. Fix some $n\gg 1$.

Then, by Lemma \ref{boundL}: $n\leq \lambda(w_n)\leq \lambda(M\cdot n)\leq 2(\lceil \log_2(M\cdot n)\rceil+2)$, a contradiction.
\end{proof}

\section{Concluding Remarks}

We finish with the following remarks and questions which are related to the construction in Section \ref{section2}.

\begin{ques}
What are the possible growth types of finitely generated simple algebras?
\end{ques}

We also note that in \cite{Groupoid}, finitely generated algebras were constructed out of infinite words; these algebras are simple provided that the word, say, $w$ is aperiodic and uniformly recurrent (namely, for every finite subword $u$ of $w$ there exists some $r>0$ for which every subword of $w$ with length at least $r$ contains a copy of $u$). 

Moreover, it was shown that the growth of the resulting simple algebra is $\sim np_w(n)$ where $p_w(n)$ is the complexity function of $w$, counting subwords of $w$ of length $n$.

In \cite{Julien}, infinite (aperiodic) uniformly recurrent words were introduced, having prescribed complexity function which can be taken to be from a relatively wide variety of monotone submultiplicative functions (see Theorem 3 in \cite{Julien}).

However, note that the growth functions constructed there are still not general enough to include all possible growth functions (e.g. $p(n)=2^{n/\log n}$ does not satisfy condition (iii) in Theorem 3 in \cite{Julien}, which requires that the derivative of the exponent, namely $\frac{1}{\log n}-\frac{1}{\log^2 n}$ tends to zero at least as quick as $n^{-\beta}$ for some $\beta>0$).

It is possible to construct finitely generated domains of intermediate growth. Indeed, if $\mathfrak{A}$ is a Lie algebra of polynomial growth then its universal enveloping algebra $U(\mathfrak{A})$ has subexponential, but not polynomially bounded growth. For details, see \cite{Lie}.

Here is another, group-theoretic based, example of a domain with intermediate growth. In \cite{domain_int}, Grigorchuk constructed a torsion-free group $G$ with intermediate growth, which turns out to be right-ordered. By a result of Passman (see Lemma 13.1.9 in \cite{Passman}), this implies that the group algebra $F[G]$ is a domain, and therefore a domain of intermediate growth.

We ask the following:

\begin{ques}
What are the possible growth types of finitely generated domains?
\end{ques}

In particular, it would be interesting to know if there exists a domain (or: graded domain) with super-polynomial growth function $f$, such that $f(n)\prec \exp(n^{\alpha})$, for every $\alpha>0$.


\end{document}